\documentclass[11pt]{amsart}

\usepackage{alltt}

\usepackage{marginnote}

\usepackage{amsfonts}
\usepackage{euscript}

\usepackage{latexsym,epsfig,verbatim}
\usepackage{amsmath,amsthm,amssymb}
\usepackage{colonequals}
\usepackage{units}

\usepackage{url}
\usepackage{color}
\usepackage[dvipsnames]{xcolor}
\definecolor{pastelpink}{RGB}{233,175,221}
\definecolor{barneypurple}{RGB}{153,85,255}
\definecolor{oceanblue}{RGB}{0,0,255}

\usepackage[colorlinks,citecolor=blue,linkcolor=red!80!black]{hyperref}
\usepackage[nameinlink]{cleveref}

\usepackage{comment}
\usepackage{amscd}
\usepackage{hyperref}
\usepackage{accents}

\usepackage{tikz}
\usetikzlibrary{matrix,arrows,decorations.pathmorphing}
\theoremstyle{plain}
\newtheorem{theorem}{Theorem}[section]
\newtheorem{lemma}[theorem]{Lemma}
\newtheorem{corollary}[theorem]{Corollary}

\crefname{claim}{Claim}{Claims}
\newtheorem*{claim*}{Claim}

\theoremstyle{definition}
\newtheorem{defn}[theorem]{Definition}

\newtheorem{question}[theorem]{Question}

\crefname{convention}{Convention}{Conventions}


\newcommand{\calF}{\mathcal{F}}

\newcommand{\stl}{\mathrm{stl}}
\newcommand{\tl}{\mathrm{tl}}
\usepackage{enumitem}
 
\DeclareMathOperator{\Map}{Map}

\title{Stable specific torsion length and periodic mapping classes}
\author{Elizabeth Field and Yvon Verberne}

\address{Elizabeth Field \\ Department of Mathematics \\ University of Utah \\ 115 South 1400 East \\ Salt Lake City, UT 84112}
\email{field@math.utah.edu}

\address{Yvon Verberne \\ School of Mathematics\\ Georgia Institute of Technology \\ 686 Cherry St. \\ Atlanta, GA 30332}
\email{yverberne3@gatech.edu}

\date{\today}

\begin{document}

\begin{abstract}
We show that for any periodic mapping class, there is some power which maps a nonseparating, simple closed curve to a distinct, disjoint nonseparating curve. As an application of this result, we introduce the notion of stable specific torsion length of a group element and show that the stable specific torsion length of a Dehn twist is bounded above by six.
\end{abstract}

\maketitle

\section{Introduction}

Let $S_g$ denote a connected, closed, orientable surface of genus $g$.
The mapping class group of $S_g$, $\Map(S_g)$, is
the group of orientation-preserving homeomorphisms of the surface up to isotopy. 
A fundamental theorem in the study of mapping class groups states that for any two disjoint, nonseparating, simple closed curves $a$ and $b$ on a surface, there exists a homeomorphism $\phi$ of the surface such that $\phi(a) = b$. However, it was previously unknown whether a fixed mapping class has a representative which can eventually map a nonseparating curve to one which is disjoint from itself. We resolve this question for periodic mapping classes.

\newcommand{\tDisjointCurves}{
Let $S_g$ be a surface of genus $g \geq 3$, and let $\phi$ be a periodic mapping class of order $d\geq 2$. 
Then, there exists some power $k \geq 1$ and a nonseparating, simple closed curve $c$ such that $i(c, \phi^{k}(c)) = 0$ and $c \neq \phi^{k}(c)$.
}

\begin{theorem}\label{T:DisjointCurves}
\tDisjointCurves
\end{theorem}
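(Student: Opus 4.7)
By Nielsen realization, I would represent $\phi$ as a finite-order orientation-preserving diffeomorphism of $S_g$, and study the quotient orbifold $\mathcal{O} := S_g/\langle\phi\rangle$, with underlying surface $X$ of genus $h$ and cone points of orders $e_1, \ldots, e_n$ dividing $d$ (constrained by Riemann--Hurwitz $2g-2 = d(2h-2) + \sum_i(d - d/e_i)$). The aim is to construct a nonseparating simple closed curve $c \subset S_g$ together with an integer $k$ such that $\phi^k(c) \neq c$ and $i(c, \phi^k(c)) = 0$, by finding a one-dimensional object $\bar{c}$ in $\mathcal{O}$ (avoiding cone points) whose preimage under the branched cover $\pi : S_g \to \mathcal{O}$ contains a nonseparating component whose $\phi$-orbit has size at least $2$. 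Note that the number of components of $\pi^{-1}(\bar{c})$ equals $d$ divided by the order of the monodromy image of $[\bar{c}]$, so disconnectedness is a purely group-theoretic condition on $[\bar{c}]$.

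The strategy splits into cases by the genus $h$ of $X$. When $h \geq 1$, the underlying surface $X$ contains many nonseparating simple closed curves; I would take a nonseparating simple closed curve $\bar{c}$ on $X$ avoiding the cone points, arranging its homotopy class to lie in $\ker \mu$ (where $\mu : \pi_1^{\mathrm{orb}}(\mathcal{O}) \to \mathbb{Z}/d$ is the monodromy of the cover), so that $\pi^{-1}(\bar{c})$ has $d$ disjoint components. Nonseparation of a component is then verified via the homology intersection pairing with a transverse lifted loop. When $h = 0$ the quotient is a branched sphere, and a more careful construction is needed: one considers either a simple closed curve on $S^2$ separating the cone points into two groups whose monodromy contributions sum to a non-generator of $\mathbb{Z}/d$, or (when no such curve suffices, as can happen for rigid triangle-group quotients) a small regular neighborhood of an arc between two cone points, whose boundary lifts to simple closed curves of the desired type. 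In either case, one verifies nonseparation by applying Riemann--Hurwitz to each side of $\bar{c}$ to compute the Euler characteristic and genus of the corresponding preimage subsurface.

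\textbf{Main obstacle.} The principal difficulty lies in the spherical case $h = 0$ with very rigid branching data, such as triangle-group quotients where every simple closed curve on $S^2$ avoiding the cone points lifts to a single connected curve in $S_g$. In those situations one must abandon the pure lift-a-curve approach and instead build $c$ from lifts of arcs between cone points, controlling both the orbit size and the nonseparation condition through a delicate local-to-global analysis of the branched cover. The hypothesis $g \geq 3$ enters precisely here, ensuring via the signature constraints sufficient topological flexibility (enough cone points or sufficient complexity of $X$) for the construction to go through uniformly across all possible quotient types.
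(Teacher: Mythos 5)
Your high-level strategy — realize $\phi$ as a finite-order diffeomorphism and work in the quotient orbifold $\mathcal{O} = S_g/\langle\phi\rangle$ — is in the same spirit as the paper's proof, but differently organized: you split cases by the genus $h$ of the underlying quotient surface, whereas the paper splits by whether the action is free and by the order of $\phi$, invoking Klein/Dugger's classification of involutions and Kulkarni's polygon model (a regular $n$-gon with side identifications on which $\phi$ acts by rotation). When $h \geq 1$ your plan is essentially sound: choose a nonseparating simple closed curve $\bar c$ on $X$ avoiding the cone points whose class lies in $\ker\mu$, lift, and check nonseparation via a transverse dual curve. The paper's fundamental-domain argument in its free case and non-hyperelliptic involution case is really the same idea.

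The genuine gap is in the spherical case $h=0$, which you correctly flag as the crux but do not resolve. Your proposed toolkit there — (i) a simple closed curve on $S^2$ separating the cone points into two groups whose monodromy sum is a non-generator of $\mathbb{Z}/d$, or (ii) the boundary of a regular neighborhood of an arc joining two cone points — is not sufficient. Consider $d=7$ with three cone points on $S^2$ of local monodromies $1,2,4 \in \mathbb{Z}/7$; by Riemann--Hurwitz this gives $g=3$, which is within the theorem's range. Here every nonzero element of $\mathbb{Z}/7$ is a generator, so every proper nonempty subset of cone points has monodromy sum a generator (its lift is connected), and every arc-neighborhood boundary has monodromy $\mu_i+\mu_j \neq 0$ (also a generator). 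Neither of your constructions produces a curve with a disconnected preimage, and the phrase ``delicate local-to-global analysis'' is not a substitute for handling such triangle-group-type quotients. The paper sidesteps exactly this by using Kulkarni's theorem to present $S_g$ as a regular polygon with side identifications on which $\phi$ is literally a rotation, and then explicitly drawing the required curve in the polygon (splitting further into the $4g{+}2$-gon with opposite-side gluings versus a $2n$-gon with a same-half identification). You would need an analogous concrete construction for rigid spherical quotients — for instance lifts of embedded arcs or trees visiting more than two cone points, with an explicit nonseparation check — before your argument is complete.
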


To prove our main result, we utilize theorems of Klein \cite{Klein} and Kulkarni \cite{Kulk} which characterize periodic maps of a surface in terms of the quotient orbifold. 
Based on our result, we ask the following two questions.
The first asks whether it is necessary to take a power of the given periodic map in order to take a nonseparating curve to a disjoint curve. The second asks whether the result of \Cref{T:DisjointCurves} holds for mapping classes which are not periodic.

\begin{question}
Fix a periodic mapping class $\phi\in \Map(S_g)$ of order $d\geq 2$. Do there exist distinct, disjoint, nonseparating, simple closed curves $a$ and $b$ such that $\phi(a) = b$?
\end{question}

\begin{question} 
For which mapping classes $f \in \Map(S_g)$ do there exist distinct, disjoint, nonseparating, simple closed curves $a$ and $b$ such that $f^k(a) = b$ for some $k\geq 1$? 
\end{question}

\noindent \textbf{Stable specific torsion length.}
Let $f \in \Map(S_g)$, and let $G$ denote a generating set of $\Map(S_g)$.
The \textit{word length} of $f$, denoted by $|f|_{G}$, measures the smallest number of elements in the generating set $G$ needed to express $f$.
Since the mapping class group is non-elementary, there is no uniform upper bound on the word length of an element $f \in \Map(S_g)$.
Therefore, it is interesting to study the growth of the word length as we pass to higher powers of $f$. 
The \textit{stable word length} of an element $f \in \Map(S_g)$ with respect to the generating set $G$ is defined to be
\[
||f||_G := \lim_{n \rightarrow \infty} \frac{|f^{n}|_G}{n}.
\]
While more is known about stable commutator length \cite{Cal2, CF, Chen}, stable word length has been studied by Calegari in \cite{Cal} and more recently by Ye in \cite{Ye21}.

Much work has been done in studying various generating sets for $\Map(S_g)$. The first generating sets found for $\Map(S_g)$ consisted of Dehn twists. In \cite{Dehn1}, Dehn first proved that $2g(g-1)$ Dehn twists generate $\Map(S_g)$. Lickorish \cite{Lick} later showed that $\Map(S_g)$ is generated by Dehn twists about $3g - 1$  nonseparating simple closed curves. Humphries \cite{Humph} reduced this generating set to $2g + 1$ Dehn twists, and further proved that this number is minimal for a generating set consisting of Dehn twists. More recent work has involved finding generating sets for $\Map(S_g)$ involving torsion elements. For instance, McCarthy-Papadopoulos showed that $\Map(S_2)$ is normally generated by two involutions, while $\Map(S_g)$ is normally generated by a single involution when $g\geq 3$ \cite{MP}. Korkmaz \cite{Kork} showed that when $g\geq 3$, $\Map(S_g)$ is generated by two elements of order $4g + 2$. For more results about specific generating sets of $\Map(S_g)$ see, for example, \cite{Kass, Luo, Mac, Mond, Waj}.

Let $T$ denote the subset of the mapping class group consisting of all torsion elements. We note that by the above results, $T$ is a generating set for $\Map(S_g)$ when $g\geq 2$. 
The \textit{torsion length} of an element $f\in\Map(S_g)$, denoted by $\mathrm{tl}(f)$, is the word length of $f$ with respect to the set of all torsion elements in $\Map(S)$. The notion of torsion length for elements of $\Map(S)$ was introduced by Brendle and Farb in \cite{BF} where they ask whether there exists a constant $C$ such that every element of $\Map(S)$ can be written as a product of at most $C$ torsion elements. This question was answered in the negative by Korkmaz in \cite{Kork} as well as by Kotschick in \cite{Kots}. 

The \textit{stable torsion length} of $f$, introduced by Korkmaz in \cite{Kork} and Kotschick in \cite{Kots} and further studied by Avery and Chen in \cite{AC}, is defined to be the stable word length of $f$ with respect to all of the torsion elements in $\Map(S)$; that is
\[
\mathrm{stl}(f) := \lim_{n \rightarrow \infty} \frac{\mathrm{tl}(f^n)}{n}.
\] 
Since torsion length is sub-additive, $\stl(f)\leq \tl(f)$ for all $f \in \Map(S_g)$.

In work of Brendle and Farb, it is shown that the Dehn twist about any nonseparating, simple closed curve can be written as the product of two torsion elements \cite{BF}.
This shows that the stable torsion length of a Dehn twist is bounded above by two.
The proof of this result is constructive, and so the torsion elements required to write the Dehn twist depend upon the curve about which you are twisting. In \cite{LM}, Lanier-Margalit show that any periodic element which is not the hyperelliptic involution normally generates $\Map(S_g)$ for all $g\geq 3$. 
Therefore, one can ask the following question.

\begin{question}\label{Question:StableTorsionDehnTwist}
Given a fixed torsion element $t\in \Map(S_g)$, how many conjugates of $t$ are required to write the Dehn twist about any nonseparating, simple closed curve?
\end{question}

This question motivates the following definition, suggested by Dan Margalit.

\begin{defn} 
Let $t\in \Map(S_g)$ be a torsion element, and let $C_t$ denote the set of all conjugates of $t$ in $\Map(S_g)$. Given any element $f\in \Map(S_g)$, we define the \textit{specific torsion length of $f$ with respect to $t$} to be 
$$\mathrm{tl}_t(f) := \inf \{n \mid f = t_1\cdots t_n \text{, where } t_i\in C_t \}, $$ and the \textit{stable specific torsion length of $f$ with respect to $t$} to be 
$$\mathrm{stl}_t(f) := \lim_{n\to \infty} \frac{\mathrm{tl}_t(f^n)}{n}. $$ We make the convention that $\tl_t(f) = \infty$ if $f$ cannot be written as a product of conjugates of $t$.
\end{defn}

Using \Cref{T:DisjointCurves}, we are able to answer \Cref{Question:StableTorsionDehnTwist}.

\newcommand{\tSixConjugates}{
Let $g \geq 3$ and let $\phi \in \Map(S_g)$ be a non-trivial periodic mapping class which is not the hyperelliptic involution.
Then, the Dehn twist about any fixed nonseparating, simple closed curve in $S_g$ can be written as a product of $6$ distinct conjugates of $\phi$.
}

\begin{theorem}\label{T:SixConjugates}
\tSixConjugates
\end{theorem}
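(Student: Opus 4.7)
The plan is to combine \Cref{T:DisjointCurves} with the Brendle--Farb factorization of a Dehn twist into two involutions, and to express each involution as a short word in conjugates of $\phi$ using the disjoint configuration guaranteed by the main theorem.

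Let $a$ denote the fixed nonseparating simple closed curve. By \Cref{T:DisjointCurves}, there exist a power $k \geq 1$ and a nonseparating simple closed curve $c$ with $c \neq \phi^k(c)$ and $i(c, \phi^k(c)) = 0$. By the change of coordinates principle, there is a mapping class $h \in \Map(S_g)$ with $h(c) = a$; after replacing $\phi$ by its conjugate $h\phi h^{-1}$ (which remains a non-trivial periodic mapping class different from the hyperelliptic involution, and whose set of conjugates in $\Map(S_g)$ agrees with that of $\phi$), I may assume that $c = a$. Setting $b = \phi^k(a)$ then yields a pair $\{a, b\}$ of disjoint, distinct, nonseparating simple closed curves, together with the identities $\phi^k T_a \phi^{-k} = T_b$ and $[T_a, T_b] = 1$, which will be the key algebraic tools.

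By the Brendle--Farb theorem mentioned in the introduction, I can write $T_a = \iota_1 \iota_2$ for two involutions $\iota_1, \iota_2 \in \Map(S_g)$. My plan is to choose these involutions so that they respect the symmetric pair $\{a, b\}$, and then to express each of $\iota_1$ and $\iota_2$ as the product of exactly three conjugates of $\phi$. Some expression of each $\iota_i$ in conjugates of $\phi$ exists by the Lanier--Margalit normal generation theorem; the improvement to exactly three per involution is the crucial step where \Cref{T:DisjointCurves} is used, since the disjoint structure of $\{a, b\}$, combined with the commutativity $[T_a, T_b] = 1$ and the conjugation relation $\phi^k T_a \phi^{-k} = T_b$, provides enough rigidity to build each $\iota_i$ from a length-three word in $\phi$. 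Multiplying then yields $T_a$ as a product of $2 \cdot 3 = 6$ conjugates of $\phi$, and pairwise distinctness of the six factors can be arranged by perturbing the conjugating elements using that the centralizer of $\phi$ in $\Map(S_g)$ is a proper subgroup.

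\textbf{Main obstacle.} The heart of the argument --- and the place where the sharp bound of $6$ is produced --- is the step of expressing each Brendle--Farb involution as exactly three conjugates of $\phi$. Carrying this out requires an explicit construction that leverages the disjoint pair $\{a, b\}$ from \Cref{T:DisjointCurves} together with a careful accounting of the conjugating elements, and this is where the improvement over the naive normal generation bound is concentrated. A secondary subtlety is verifying pairwise distinctness of the six resulting conjugates, which I expect to follow from a small modification of the construction once distinctness is checked on an auxiliary curve whose stabilizer avoids the centralizer of $\phi$.
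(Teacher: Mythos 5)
The route you take is genuinely different from the paper's (which is built on the lantern relation: it fixes an embedded lantern in $S_g$, uses \Cref{T:DisjointCurves} and \Cref{Cor:NoBoundingPair} together with the change of coordinates principle to produce mapping classes $f$, $g$, $h$ --- each a conjugate of a power of $\phi$ --- carrying one lantern curve to another, and then a short algebraic massage of the lantern relation yields $T_{\alpha_1}$ as a product of six conjugates). That mechanism hands you six conjugates in a single step: each of the three ``pair'' conditions contributes exactly two factors.

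Your proposal instead pivots to the Brendle--Farb involution factorization $T_a = \iota_1 \iota_2$ and then asserts that each involution can be written as a product of exactly three conjugates of $\phi$. That assertion is the entire content of the theorem and is never established; you flag it yourself as the ``main obstacle,'' but you give no construction, only the vague claim that the disjointness of $\{a,b\}$, the relation $\phi^k T_a \phi^{-k} = T_b$, and $[T_a,T_b]=1$ supply ``enough rigidity.'' There is no visible mechanism by which these facts produce a factorization of an involution into three conjugates of an arbitrary periodic $\phi$: the Brendle--Farb involutions are geometrically large (they invert handles across most of the surface), they depend on the chosen curve, and --- if $\phi$ has odd order, say --- it is far from clear that a product of three conjugates of $\phi$ can even have order two in general, let alone be a prescribed involution. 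Without an explicit construction, normal generation (Lanier--Margalit) only tells you that \emph{some} finite product works, with no control on the count, so the bound $6$ does not follow. As it stands, the argument has a genuine gap exactly at the step that produces the theorem's conclusion; you would need either to supply the missing three-term factorization of each involution, or to adopt a structure (such as the lantern relation, as the paper does) that produces the six conjugates directly rather than via an intermediate involution decomposition.
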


As a corollary, we show that the stable specific torsion length for a Dehn twist about any simple closed curve $c$ with respect to most torsion elements is bounded above by $6$.

\begin{corollary}
Let $g \geq 3$, let $T_c$ be the Dehn twist about any fixed nonseparating curve $c$, and let $t \in \Map(S_g)$ be any torsion element which is not the hyperelliptic involution.
Then, $\stl_t(T_c) \leq 6$.
\end{corollary}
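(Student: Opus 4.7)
The proof is essentially an immediate consequence of \Cref{T:SixConjugates} combined with subadditivity of specific torsion length, so the plan is short.

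First I would verify that $\tl_t(T_c) \leq 6$. Since $t$ is a non-trivial torsion element of $\Map(S_g)$ which is not the hyperelliptic involution, and $g \geq 3$, the hypotheses of \Cref{T:SixConjugates} are satisfied with $\phi = t$. Applying that theorem to the fixed nonseparating simple closed curve $c$, one obtains an expression
\[
T_c = t_1 t_2 t_3 t_4 t_5 t_6,
\]
where each $t_i$ lies in $C_t$, the conjugacy class of $t$. By definition of specific torsion length, this gives $\tl_t(T_c) \leq 6$.

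Next, I would observe that $\tl_t$ is subadditive on $\Map(S_g)$: if $f = t_1\cdots t_n$ and $g = s_1 \cdots s_m$ with $t_i, s_j \in C_t$, then $fg = t_1\cdots t_n s_1 \cdots s_m$ is itself a product of $n+m$ conjugates of $t$, so $\tl_t(fg) \leq \tl_t(f) + \tl_t(g)$. Iterating this with $f = g = T_c$ and then $f = T_c, g = T_c^{n-1}$, etc., yields $\tl_t(T_c^n) \leq n\,\tl_t(T_c) \leq 6n$ for every $n \geq 1$. By Fekete's lemma applied to the subadditive sequence $\{\tl_t(T_c^n)\}$, the limit defining $\stl_t(T_c)$ exists and equals $\inf_n \tl_t(T_c^n)/n$. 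In particular,
\[
\stl_t(T_c) = \lim_{n\to\infty} \frac{\tl_t(T_c^n)}{n} \leq \lim_{n\to\infty} \frac{6n}{n} = 6,
\]
which is exactly the claimed bound.

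There is no serious obstacle here: all of the work has been pushed into \Cref{T:SixConjugates}, and the reduction from the stable version to the unstable version is the standard one for subadditive word-length invariants. The only point meriting a brief sentence in the write-up is the verification that \Cref{T:SixConjugates} applies to $t$ itself, which follows immediately from the hypothesis that $t$ is a non-trivial torsion element other than the hyperelliptic involution.
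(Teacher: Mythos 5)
Your proof is correct and follows the same reasoning the paper leaves implicit in calling this an ``immediate corollary'': \Cref{T:SixConjugates} gives $\tl_t(T_c)\leq 6$, and subadditivity of $\tl_t$ (exactly as the paper notes for $\tl$ earlier, $\stl(f)\leq\tl(f)$) then gives $\stl_t(T_c)\leq 6$. Nothing is missing.
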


Although we are able to find an upper bound for the stable specific torsion length of a Dehn twist with respect to most periodic mapping classes, there are still many questions which remain.

\begin{question}\label{Question:SSTl_Optimal}
Given a torsion element $t\in \Map(S_g)$ and a Dehn twist $T_c$ about any nonseparating curve $c$, what is $\stl_t(T_c)$? 
\end{question}

\begin{question}\label{Question:SSTl_DoesItVary}
Does the stable specific torsion length of a Dehn twist depend on the specific torsion element chosen?
\end{question}

\begin{question}\label{Question:SSTl_OtherElements}
Is there a uniform upper bound on stable specific torsion length for other elements of $\Map(S_g)$, or is stable specific torsion length unbounded on $\Map(S_g)$? Does this answer depend on the Nielsen-Thurston type of the mapping class?
\end{question}

Recently, Lanier proved that given any periodic normal generator $\phi$ of $\Map(S_g)$ of order at least 3,  $\Map(S_g)$ is generated by $60$ conjugates of $\phi$ \cite[Theorem 1.4]{Lanier}. This answered a question asked by Lanier and Margalit \cite[Question 3.4]{LM}. 
In the proof of this result, Lanier shows that a Dehn twist about a nonseparating curve can be generated by $12$ conjugates of $\phi$. 
As \Cref{T:SixConjugates} shows that a Dehn twist about a nonseparating curve can be generated by only $6$ conjugates of $\phi$, we can improve Lanier's bound and show that it suffices to use 54 conjugates of $\phi$ to generate $\Map(S_g)$. \\

\noindent \textbf{Acknowledgements:} Both authors would like to thank Dan Margalit for suggesting the application to stable torsion length. The authors would also like to thank Chris Leininger, Mahan Mj, and Kasra Rafi for many helpful conversations. The first author was partially supported by NSF grants DMS-1840190 and DMS-2103275. 
The second author was supported by the National Science Foundation under Grant No. DMS-1928930 while participating in
a program hosted by the Mathematical Sciences Research Institute in Berkeley, California, during the Fall 2020 semester. The second author was also partially supported by an NSERC-PDF Fellowship.

\section{Proof of main result}\label{Sec:ProofOfMainTheorem}

In this section, we prove \Cref{T:DisjointCurves} which shows that a representative of a periodic mapping class will always map a nonseparating simple closed curve to a distinct, disjoint, nonseparating curve. 
In order to prove this theorem, we use results from Klein \cite{Klein} and Kulkarni \cite{Kulk} which utilize orbifolds to characterize periodic maps of order two, and order greater than two, respectively.

\medskip

\smallskip

\noindent{\bf  \Cref{T:DisjointCurves}}
    {\em \tDisjointCurves } 

\begin{proof}
Let $S_g$ be a surface of genus $g \geq 3$.
Let $\phi$ be a periodic mapping class with order at least 2. 
We fix a standard representative of the mapping class $\phi$ and also denote this representative homeomorphism by $\phi$. 

We consider the following three cases which describe the ways in which $\langle \phi \rangle$ can act on $S_g$, as was done in \cite[Proposition 3.1]{LM}:
\begin{enumerate}
    \item The action of $\langle\phi\rangle$ is free;
    \item The action of $\langle \phi \rangle$ is not free and $\phi$ has order 2; and
    \item The action of $\langle \phi \rangle$ is not free and $\phi$ has order greater than 2.
\end{enumerate}

\noindent \textit{Case (1):} Suppose the action of $\langle\phi\rangle$ is free.
Then, this action must be a covering action.
Since $g\geq 3$, there exists a fundamental domain for the action which contains genus.
Fix such a fundamental domain, $\calF$. Since $\calF$ contains genus, we can find a simple, closed, nonseparating curve $c$ in $\calF$ which is not homotopic to a boundary component of $\calF$. Then, $i(c, \phi(c)) = 0$ and $c \neq \phi(c)$.

\noindent \textit{Case (2):} Suppose the action of $\langle \phi \rangle$ is not free and $\phi$ has order 2. 
In \cite{Klein}, Klein gives a classification of such homeomorphisms.
In a more modern approach to the this work, Dugger explicitly constructs the $4+\lceil \frac{g}{2} \rceil$ involutions (up to conjugation) of a surface of genus $g \geq 0$; see \cite[Theorem 5.7]{Dugger}.
From this result, we see that in all cases but the hyperelliptic involution, orientation preserving involutions have a fundamental domain which contains genus. Since conjugation is a homeomorphism, it preserves the topological type of the fundamental domain. Thus, we can find a curve whose image is taken to a disjoint curve as in Case (1). 
In the case of the hyperelliptic involution, we choose the curve shown in \Cref{Fig:Case2Hyperelliptic}.

\begin{figure}[ht]
\includegraphics[width=300\unitlength]{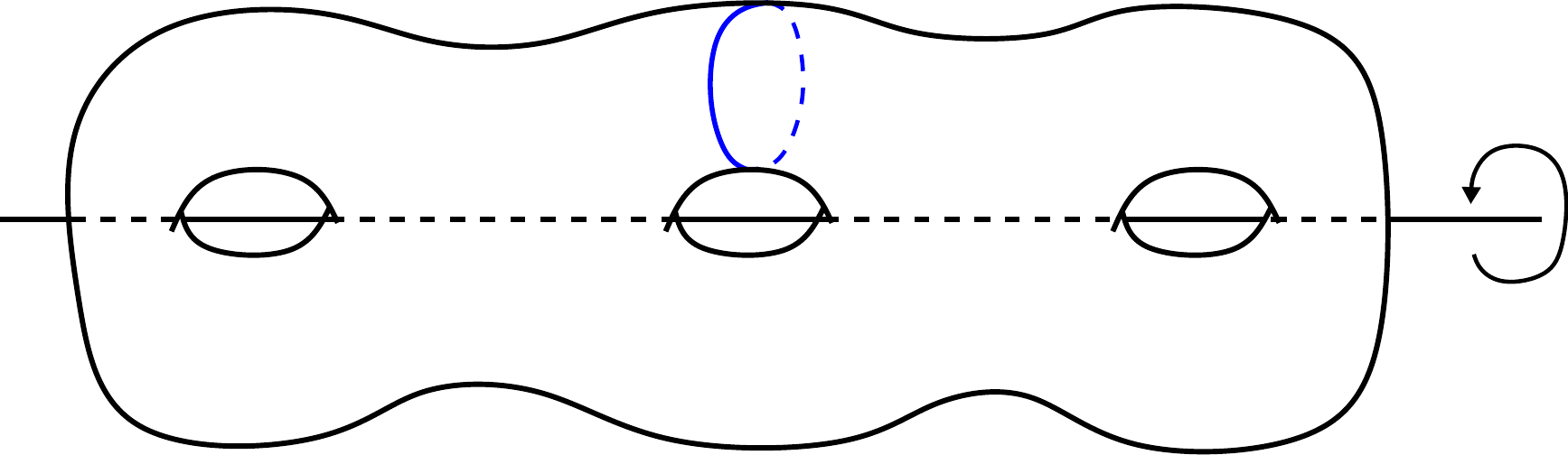}
\put(10,42){$\phi$}
\caption{Curve which maps to a disjoint curve by the hyperelliptic involution.}
\label{Fig:Case2Hyperelliptic}
\end{figure}

\noindent \textit{Case (3):}
Finally, suppose the order of $\phi$ is greater than $2$ and the action of $\langle \phi \rangle$ on $S_g$ is not
free. Thus, some power of $\phi$ has a fixed point. 
If $\phi$ is a root of the hyperelliptic involution, then by a result of Lanier-Margalit (\cite[Lemma 3.2]{LM}), there exists a power of $\phi$ that is neither the identity nor the hyperelliptic involution which has a standard $\Map(S_g)$-representative with a fixed point.
Thus, we can pass to this power of $\phi$ which has a fixed point and which is neither the identity nor the hyperelliptic involution.

In \cite[Theorem 2]{Kulk}, Kulkarni states that if $\phi$ is a finite-order homeomorphism of $S_g$ that has
a fixed point, then $S_g$ can be represented as a quotient space of some regular $n$-gon in such a way
that $\phi$ is realized as a rotation of the $n$-gon by some multiple of $2\pi/n$. 
We will 
use this representation to construct a nonseparating, simple closed curve $c$ for which $i(\phi^k(c), c) = 0$ and $\phi^k(c) \neq c$ for some $k\geq 1$. 

There are two cases we must consider. 
The first case is when the polygon has $4g+2$ vertices and opposite edges of the polygon are identified. 
The other case is when the polygon is some $2n$-gon and at least one edge is identified to another edge within the same half of the polygon.

We first consider the polygon with $4g+2$ vertices with opposite edge gluings.
Label the vertices of this polygon in a counterclockwise ordering with the labels $0, 1, \ldots, 4g+1$, as in \Cref{Fig:4g+2OtherMaps}. We claim that the curve $c$ shown in \Cref{Fig:4g+2OtherMaps} is the required curve in this case. This is the curve which starts on the left half of the edge between vertices 0 and 1, travels to the right half of the edge between vertices 2 and 3, comes out on the edge between the vertices labelled by $2g + 3$ and $2g + 4$, and then closes up by looping around the edge between the vertices labelled $2g + 2$ and $2g + 3$. 
If $\phi$ has degree $d$, then $\phi$ will map the vertex labeled $0$ to the set of vertices $\left\{ 0, \frac{4g + 2}{d}, \frac{2(4g + 2)}{d}, \ldots \frac{(d-1)(4g + 2)}{d} \right\} $, in some order. 
Hence, there is some power of $\phi$ which will map the vertex labeled $0$ to the vertex labeled $\frac{4g+2}{d}$. 
Thus, a fundamental domain for $\phi$ is the polygon with vertices at $0$, the center of the $4g + 2$-gon, and $\frac{4g + 2}{d}$. 
Note that if 
$$\frac{4g + 2}{d} + 3 \leq \frac{4g + 2}{2}, $$ 
then some power of $\phi$ will map the curve $c$ in \Cref{Fig:4g+2OtherMaps} off of itself.
Since we are assuming that $g\geq 3$, this inequality always holds as long as $\phi$ has order at least $4$. If $\phi$ has order $3$, then this inequality is satisfied as long as $g\geq 4$. If $g=3$, then there are no maps of order $3$ for this particular gluing, because $3$ does not divide $4g + 2$ when $g = 3$. Thus, the theorem holds in the case that the polygon has $4g + 2$ edges with opposite edges identified.

\begin{figure}[ht]
\includegraphics[width=115\unitlength]{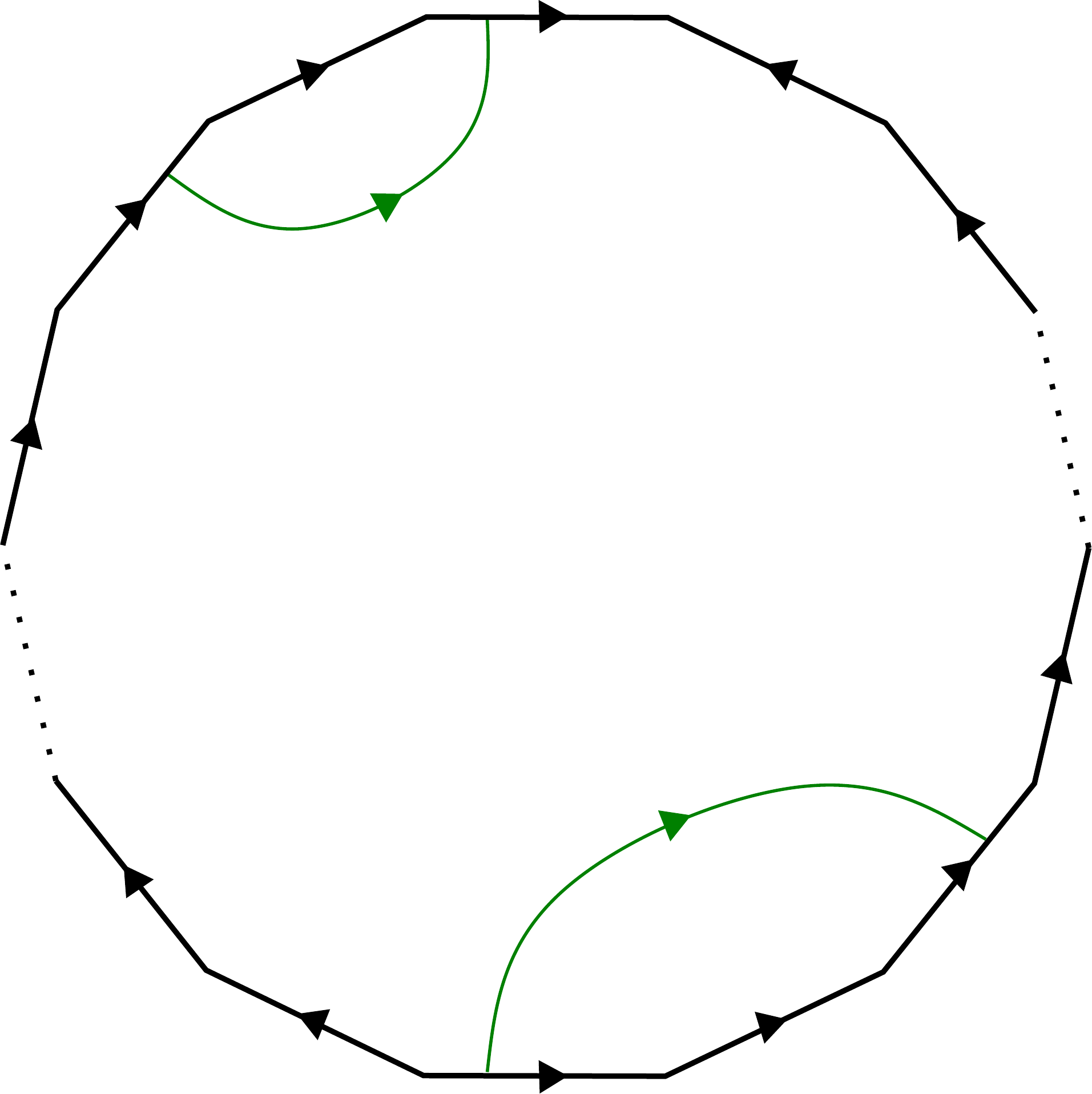}
\put(-72,-8){\small{0}}
\put(-46,-8){\small{1}}
\put(-21,5){\small{2}}
\put(-3,27){\small{3}}
\put(-49,117){\small{$2g+1$}}
\caption{Identification diagram for $S_g$ where opposite sides of a $4g + 2$-gon are identified. The curve $c$ is drawn for a periodic map of order $d \geq 3$.}
\label{Fig:4g+2OtherMaps}
\end{figure}

Finally, suppose $S_g$ is represented as a quotient space of a regular $2n$-gon where at least one edge glues to an edge in the same half. 
Let $e_1$ and $e_2$ denote two edges which glue together on the same side of the polygon. 
We say that $e_1$ and $e_2$ are \textit{minimal} if no two edges between $e_1$ and $e_2$ on that half of the polygon glue to each other.
Choose such a minimal pair of edges and let $c$ denote the simple closed curve which has its endpoints on these two edges. 
First, note that if $e_1$ and $e_2$ are adjacent, then $S_g$ would have had a cone point, which it does not. 
Thus, there must be some edge between $e_1$ and $e_2$ which glues to the other half of the polygon. 
Therefore, $c$ is both essential and non-separating. 
As $\phi$ has order $d\geq 3$, there exists a power $k\geq 1$ of $\phi$ which maps $c$ to a distinct, disjoint curve $\phi^k(c)$ on the other half of the polygon.

\end{proof}

A \textit{bounding pair} is a pair of curves $\alpha, \beta\in S_g$ which are disjoint, nonseparating and homologous. Careful observation of the proof of \Cref{T:DisjointCurves} shows that whenever $\phi$ is not the hyperelliptic involution, the curves $c$ and $\phi^k(c)$ can be chosen so that they do not form a bounding pair. 
We codify this in the following corollary as this observation is key to the proof of \Cref{T:SixConjugates}.

\begin{corollary}\label{Cor:NoBoundingPair}
Let $\phi\in \Map(S_g)$ be a periodic mapping class of order $d\geq 2$. If $\phi$ is not the hyperelliptic involution, then the curves $c$ and $\phi^{k}(c)$ in \Cref{T:DisjointCurves} can be chosen so that they do not form a bounding pair.
\end{corollary}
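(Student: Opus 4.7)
The plan is to revisit each of the three cases from the proof of \Cref{T:DisjointCurves} and, under the assumption that $\phi$ is not the hyperelliptic involution, refine the choice of $c$ so that $S_g \setminus (c \cup \phi^k(c))$ is connected. Since two disjoint, nonseparating, simple closed curves form a bounding pair precisely when their union separates the surface, verifying this connectedness is exactly what is required.

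In Cases (1) and (2) (a free action, or a non-hyperelliptic involution), the fundamental domain $\calF$ has positive genus, so I would choose $c \subset \calF$ to be an essential simple closed curve whose complement in $\calF$ is connected---a meridian of a handle of $\calF$, say. Then $c$ is nonseparating in $S_g$ and $\phi(c)$ lies in a distinct translate of $\calF$. To verify that $S_g \setminus (c \cup \phi(c))$ is connected, I would exhibit an explicit arc from one side of $c$ to the other that exits $\calF$ across its boundary, passes through translates of $\calF$ not containing $\phi(c)$, and re-enters $\calF$ on the opposite side of $c$.

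In Case (3) (order $d \geq 3$ with a fixed point), I would argue inside Kulkarni's polygon model. For the $(4g+2)$-gon with opposite-edge identifications, the explicit curve $c$ of \Cref{Fig:4g+2OtherMaps} and its rotate $\phi^k(c)$ occupy different angular sectors of the polygon, and one can construct a path in the polygon from one complementary region of $c$ to the other that passes near the polygon's center while avoiding $\phi^k(c)$. In the $2n$-gon with a same-half identification, minimality of the edge pair supporting $c$ forces an intermediate edge that identifies with an edge on the opposite half of the polygon; this yields a path through the other half joining the two sides of $c$ without meeting $\phi^k(c)$.

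The main obstacle is the bookkeeping in Case (3): one must track the polygon identifications carefully enough to confirm connectedness of the complement in every subcase and for every admissible order $d$. The hyperelliptic involution must be excluded because it acts as $-\mathrm{Id}$ on $H_1(S_g;\Z)$, so every nonseparating curve $c$ is homologous to $\phi(c)$, forcing $c$ and its image to form a bounding pair regardless of the choice made.
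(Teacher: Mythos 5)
Your overall strategy matches the paper's: run through the same case analysis as in the proof of \Cref{T:DisjointCurves} and, under the non-hyperelliptic hypothesis, refine (or preserve) the choice of $c$ so that $S_g \setminus (c \cup \phi^k(c))$ is connected. Cases (1), (2), and the $(4g+2)$-gon subcase of (3) are handled in essentially the same way as the paper does, and your remark about the hyperelliptic involution acting as $-\mathrm{Id}$ on $H_1(S_g;\Z)$ correctly identifies why that element must be excluded.

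Where you diverge from the paper---and where there is a gap---is the $2n$-gon subcase. You assert that the intermediate same-half edge pair $e_3 \sim e_3'$ supplied by minimality ``yields a path through the other half joining the two sides of $c$ without meeting $\phi^k(c)$,'' but this is not automatic: the partner edge $e_3'$ lies somewhere on the opposite half, and it may lie in the sub-region of the polygon cut off by $\phi^k(c)$ rather than in the central region. In that case your path connects the $c$-peripheral region to the $\phi^k(c)$-peripheral region, not to the complement of both, and an additional argument (e.g.\ chasing the $\phi^k$-rotated gluing $\phi^k(e_3)\sim\phi^k(e_3')$) is needed to close the loop. The paper avoids this bookkeeping entirely by arguing indirectly: \emph{if} $c$ and $\phi^k(c)$ did form a bounding pair, the subsurface they bound would contain genus, and since $\phi$ acts by rotation that genus is carried off itself, so one replaces $c$ with a curve $d$ wrapping around that genus, as in Cases (1) and (2). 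You should either plug the ``where does $e_3'$ land'' gap directly, or adopt this replacement argument.
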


\begin{proof}
In cases (1) and (2) of the proof of \Cref{T:DisjointCurves}, we may choose the curve $c$ to go around some genus which is mapped to a different genus by $\phi$. Thus, the curves $c$ and $\phi^k(c)$ may be chosen so that they do not form a bounding pair. 
Now suppose we are in case (3) of \Cref{T:DisjointCurves}. 
In the first setting, where the polygon has $4g + 2$ edges with opposite edges identified, the curve $c$ is explicitly chosen so that it is nonseparating. We note that the curve $c$ and its image $\phi^k(c)$ separate the (unglued) polygon into 5 sections: 1 ``central'' section, two ``peripheral'' sections which are cut off by $c$, and two ``peripheral'' sections which are cut off by $\phi^k(c)$. The curve $c$ was explicitly constructed so that the two ``peripheral'' sections are not separated from the central section of the polygon; see \Cref{Fig:4g+2OtherMaps}. As the curve $\phi^k(c)$ cuts away two additional peripheral sections from the polygon which are still not separated from the central section, the curves $c$ and $\phi^k(c)$ cannot form a bounding pair.

Now suppose we are in the setting where the polygon is some regular $2n$-gon and two edges glue together in the same half. 
Suppose that the curve $c$ and $\phi^k(c)$ form a bounding pair. 
Then, since $i(c, \phi^k(c))\neq 0$, this implies that there must be some genus in the subsurface cut off by $c$ and $\phi^k(c)$. 
As $\phi$ acts on the $2n$-gon by rotation, this implies that the genus cut off by $c$ and $\phi^k(c)$ must get taken off itself by some power of $\phi$. 
So, as before, we can choose a curve $d$ which wraps around the genus and thus gets taken disjoint from itself under some power of $\phi$. 
\end{proof}

\section{Stable specific torsion length of Dehn twists}\label{Sec:Application}

In this section, we prove \Cref{T:SixConjugates} and thereby show that the stable specific torsion length of a Dehn twist is bounded above by 6.

\begin{figure}[ht]
\includegraphics[width=300\unitlength]{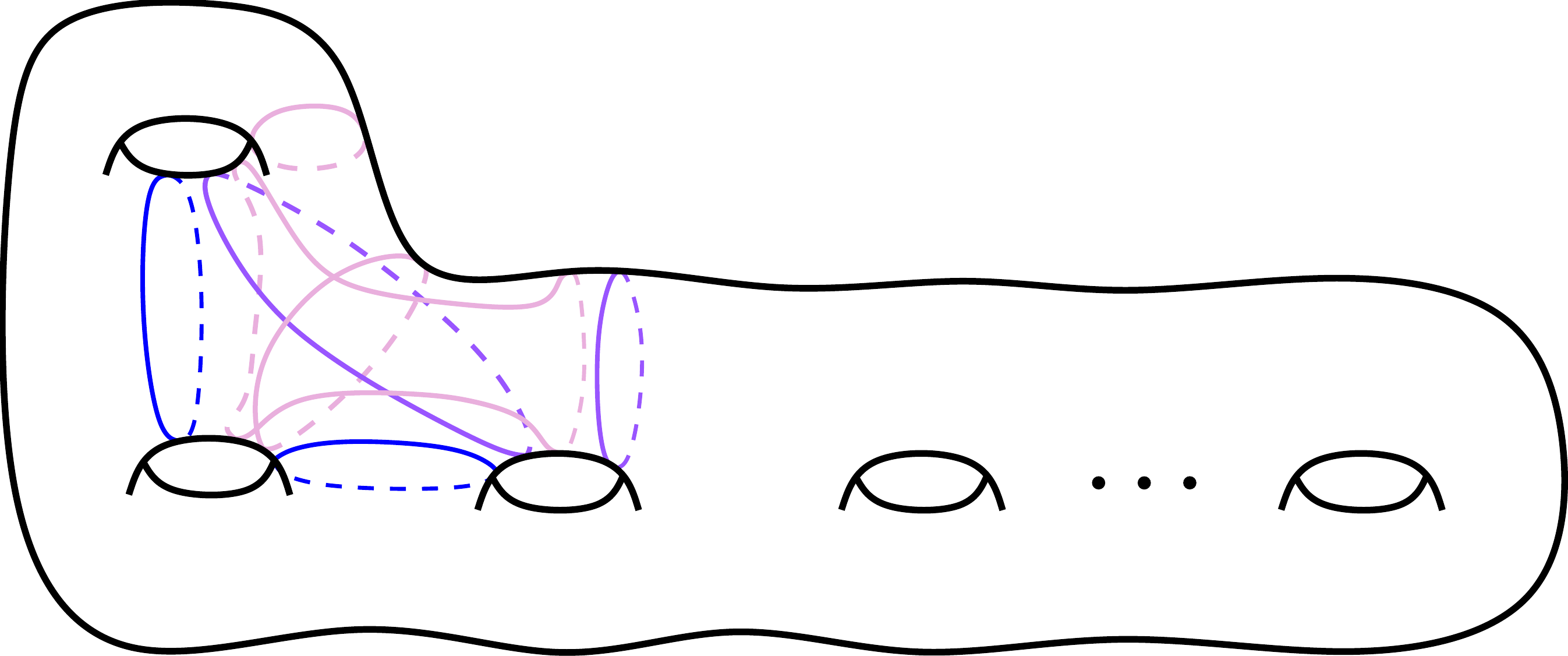}
\put(-175,50){\textcolor{barneypurple}{$\gamma_2$}}
\put(-249,109){\textcolor{pastelpink}{$x_1$}}
\put(-287,63){\textcolor{oceanblue}{$\alpha_2$}}
\put(-232,25){\textcolor{oceanblue}{$\alpha_1$}}
\put(-221, 80){\textcolor{pastelpink}{$x_3$}}
\put(-198, 78){\textcolor{pastelpink}{$x_2$}}
\caption{An embedded lantern in a surface $S_g$.}
\label{Fig:EmbeddedLantern}
\end{figure}

We call a collection of curves $\{\alpha_1, \alpha_2, \gamma_1, \gamma_2, x_1, x_2, x_3\}$ on the surface a \textit{lantern} (see \Cref{Fig:EmbeddedLantern}) if they satisfy the following lantern relation (\cite[Proposition 5.1]{FM}):
\begin{equation}\label{Eqn:Lantern}
T_{\alpha_1} = T_{\gamma_1}T_{\gamma_2}^{-1}T_{x_3}T_{x_1}^{-1}T_{x_2}T_{\alpha_2}^{-1}.
\end{equation}
We will use the aforementioned lantern relation to prove the following lemma which allows us to construct a Dehn twist about the nonseparating curve $\alpha_1$.

\begin{lemma}\label{Lem:LanternToTwist}
Suppose we are given a subsurface $L$ in $S_g$ containing an embedded lantern, as in \Cref{Fig:EmbeddedLantern}, and elements $f$, $g$, and $h$ in $\mathrm{Map}(S_g)$ such that
\begin{enumerate}
    \item $f(\gamma_1) = \gamma_2$;
    \item $g(x_3) = x_1$; and
    \item $h(x_2) = \alpha_2$.
\end{enumerate}
Then, the Dehn twist $T_{\alpha_1}$ may be written as a product in $f^{-1}$, $g^{-1}$, $h^{-1}$, an element conjugate to $f$, an element conjugate to $g$, and an element conjugate to $h$.
\end{lemma}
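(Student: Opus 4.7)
The plan is to invoke the lantern relation \eqref{Eqn:Lantern} and then apply the change-of-coordinates principle $T_{\phi(c)} = \phi\, T_c\, \phi^{-1}$ to rewrite each of the three ``twist pairs'' appearing on the right-hand side as the product of a conjugate of one of $f$, $g$, $h$ with the inverse of that same element. Accordingly, I would first group the right-hand side of \eqref{Eqn:Lantern} as
\[
T_{\alpha_1} \;=\; \bigl(T_{\gamma_1} T_{\gamma_2}^{-1}\bigr)\bigl(T_{x_3} T_{x_1}^{-1}\bigr)\bigl(T_{x_2} T_{\alpha_2}^{-1}\bigr),
\]
with the intention of processing each of the three parenthesized pairs using one of hypotheses (1)--(3), in that order.

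For the first pair, hypothesis (1) gives $T_{\gamma_2} = f T_{\gamma_1} f^{-1}$, hence $T_{\gamma_2}^{-1} = f T_{\gamma_1}^{-1} f^{-1}$, and multiplying on the left by $T_{\gamma_1}$ yields
\[
T_{\gamma_1} T_{\gamma_2}^{-1} \;=\; \bigl(T_{\gamma_1}\, f\, T_{\gamma_1}^{-1}\bigr)\, f^{-1},
\]
where the parenthesized factor is visibly conjugate to $f$. The same algebra with hypothesis (2), using $T_{x_1} = g T_{x_3} g^{-1}$, converts the second pair into $\bigl(T_{x_3}\, g\, T_{x_3}^{-1}\bigr)\, g^{-1}$; and with hypothesis (3), using $T_{\alpha_2} = h T_{x_2} h^{-1}$, it converts the third pair into $\bigl(T_{x_2}\, h\, T_{x_2}^{-1}\bigr)\, h^{-1}$.

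Substituting these three rewritings back into \eqref{Eqn:Lantern} would then produce
\[
T_{\alpha_1} \;=\; \bigl(T_{\gamma_1} f T_{\gamma_1}^{-1}\bigr) f^{-1}\,\bigl(T_{x_3} g T_{x_3}^{-1}\bigr) g^{-1}\,\bigl(T_{x_2} h T_{x_2}^{-1}\bigr) h^{-1},
\]
which is exactly the desired expression: a product of $f^{-1}$, $g^{-1}$, $h^{-1}$ together with one conjugate each of $f$, $g$, and $h$. I do not foresee any genuine obstacle here; the entire argument is a formal manipulation of the lantern relation, and the only thing one must keep straight is the order of composition so that the conjugated copy and the inverse appear in the positions dictated by \eqref{Eqn:Lantern}.
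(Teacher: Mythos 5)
Your proof is correct and takes essentially the same approach as the paper: invoke the lantern relation, use $T_{\psi(c)} = \psi T_c \psi^{-1}$ to relate twists about corresponding boundary curves via the hypotheses on $f$, $g$, $h$, and regroup. The only cosmetic difference is the direction of conjugation (you rewrite $T_{\gamma_2}$ in terms of $T_{\gamma_1}$, the paper rewrites $T_{\gamma_1}$ in terms of $T_{\gamma_2}$), which just swaps the order of the conjugate and the inverse within each pair; both yield the claimed product.
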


\begin{proof}
Let $L$ be a subsurface in $S_g$ containing the embedded lantern shown in \Cref{Fig:EmbeddedLantern}, and let $f, g, h\in \Map(S_g)$ satisfy the above conditions. We will use these conditions on $f$, $g$, and $h$ to rewrite the lantern relation, Equation \ref{Eqn:Lantern}, as follows:

\begin{equation*}
    \begin{aligned}
    T_{\alpha_1} &= T_{\gamma_1}T_{\gamma_2}^{-1}T_{x_3}T_{x_1}^{-1}T_{x_2}T_{\alpha_2}^{-1} \\
    &= T_{f^{-1}(\gamma_2)}T_{\gamma_2}^{-1}T_{g^{-1}(x_1)}T_{x_1}^{-1}T_{h^{-1}(\alpha_2)}T_{\alpha_2}^{-1}\\
    &= (f^{-1}T_{\gamma_2}f) T_{\gamma_2}^{-1} (g^{-1}T_{x_1}g)T_{x_1}^{-1} (h^{-1} T_{\alpha_2} h) T_{\alpha_2}^{-1}\\
    &= f^{-1} (T_{\gamma_2}fT_{\gamma_2}^{-1}) g^{-1}(T_{x_1}gT_{x_1}^{-1}) h^{-1} (T_{\alpha_2} h T_{\alpha_2}^{-1})
    \end{aligned}
\end{equation*}
This is a product of $f^{-1}, g^{-1}, h^{-1}$, and  conjugates of $f$, $g$, and $h$, as desired.
\end{proof}
While the above lemma is stated for a Dehn twist about the curve $\alpha_1$, an application of the following change of coordinates principle (see \cite[Section 1.3.3]{FM}) will allow us to write the Dehn twist about any nonseparating, simple closed curve as a conjugate of this product. 

\begin{lemma}[Change of coordinates principle]\label{Lem:ChangeOfCoordiantes}
If $\{\alpha_1, \alpha_2\}$ and $\{\beta_1, \beta_2\}$ are two pairs of disjoint, nonseparating, simple closed curves such that the cut surfaces $S \setminus \{\alpha_1, \alpha_2 \}$ and $S \setminus \{ \beta_1, \beta_2 \}$ are homeomorphic, then there is a homeomorphism $\psi : S \rightarrow S$ which maps the pair $\{\alpha_1, \alpha_2\}$ to the pair $\{\beta_1, \beta_2\}$. 
\end{lemma}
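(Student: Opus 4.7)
The plan is to apply the change of coordinates principle for a single nonseparating simple closed curve twice, reducing the pair case to the single-curve case step by step. First, since $\alpha_1$ and $\beta_1$ are both nonseparating simple closed curves in $S$, the classical single-curve version of the principle (see Farb--Margalit, Section 1.3) provides a homeomorphism $\psi_1 \colon S \to S$ with $\psi_1(\alpha_1) = \beta_1$. Replacing the pair $\{\alpha_1, \alpha_2\}$ by $\psi_1(\{\alpha_1,\alpha_2\}) = \{\beta_1, \psi_1(\alpha_2)\}$, we may assume $\alpha_1 = \beta_1$; it then suffices to find $\psi_2 \colon S \to S$ fixing $\alpha_1$ setwise and sending $\alpha_2$ to $\beta_2$.

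Next, cut $S$ open along $\alpha_1 = \beta_1$ to obtain a connected surface $S'$ with two distinguished boundary circles $\partial_+$ and $\partial_-$; connectedness follows because $\alpha_1$ is nonseparating. The curves $\alpha_2$ and $\beta_2$ lie in the interior of $S'$ as disjoint essential simple closed curves $\alpha_2', \beta_2'$. The hypothesis that the cut surfaces $S \setminus \{\alpha_1, \alpha_2\}$ and $S \setminus \{\beta_1, \beta_2\}$ are homeomorphic translates directly to the statement that $S' \setminus \alpha_2'$ and $S' \setminus \beta_2'$ are homeomorphic, which in particular forces $\alpha_2'$ and $\beta_2'$ to have the same topological type in $S'$ (both separating with matching boundary data, or both nonseparating). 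Invoking the change of coordinates principle for a single simple closed curve inside the surface with boundary $S'$, we obtain a homeomorphism $\psi_2' \colon S' \to S'$ sending $\alpha_2'$ to $\beta_2'$ and preserving the pair $\{\partial_+, \partial_-\}$ setwise.

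Finally, since $\psi_2'$ sends $\{\partial_+, \partial_-\}$ to itself compatibly with the gluing that recovers $S$ from $S'$, it descends to a homeomorphism $\psi_2 \colon S \to S$ with $\psi_2(\alpha_1) = \alpha_1$ and $\psi_2(\alpha_2) = \beta_2$. The composition $\psi := \psi_2 \circ \psi_1$ is then the required homeomorphism carrying $\{\alpha_1, \alpha_2\}$ to $\{\beta_1, \beta_2\}$.

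The main obstacle is the last step of the second paragraph: one must know that the single-curve change of coordinates in $S'$ can be performed while prescribing the behavior on $\partial S'$. This requires the standard strengthening of the principle that allows one to realize any combinatorially admissible action on the boundary components, which in turn rests on the classification of compact orientable surfaces and the fact that mapping class groups of surfaces with boundary act transitively on unordered pairs of boundary components of a fixed topological type. Once this refinement is in hand, the descent to $S$ and the composition with $\psi_1$ are routine.
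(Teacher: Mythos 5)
This lemma is not proved in the paper; it is attributed to Farb--Margalit, Section~1.3.3, as a known form of the change of coordinates principle, so there is no proof in the text to compare against. That said, your reconstruction does track the standard textbook argument: reduce to the single-curve version iteratively by cutting along one curve at a time and classifying the complement. Your reduction step is clean (after applying $\psi_1$, disjointness and nonseparation are preserved, and the cut-surface hypothesis transfers correctly since $S' \setminus \alpha_2' = S \setminus \{\alpha_1,\alpha_2\}$). Your case analysis of $\alpha_2'$ in $S'$ is also correct, with one point worth making explicit: because $\alpha_2$ is nonseparating in the closed surface $S$, the curve $\alpha_2'$ cannot separate $S'$ with $\partial_+$ and $\partial_-$ on the same side (that configuration would force $\alpha_2$ to separate $S$), so the only possibilities are nonseparating, or separating with $\partial_+$ and $\partial_-$ in different complementary components. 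These are distinguished by the homeomorphism type of $S' \setminus \alpha_2'$, so your inference from the hypothesis is sound.

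The issue you flag at the end is the only genuine point of care, and you have identified it correctly: to descend $\psi_2'$ to $S$, you need not merely that $\{\partial_+,\partial_-\}$ is preserved setwise (which is automatic, since these are the only boundary components of $S'$) but that the boundary restriction of $\psi_2'$ intertwines with the gluing map $\iota\colon \partial_+\to\partial_-$. This is repairable by the usual collar trick: any discrepancy is an orientation-preserving self-homeomorphism of a circle, hence isotopic to the identity, and can be absorbed by post-composing $\psi_2'$ with a homeomorphism supported in a collar of $\partial S'$. So this is indeed routine, though it deserves the explicit sentence rather than an appeal to ``any combinatorially admissible action on the boundary components,'' which as phrased sounds like a statement about permutations of boundary circles rather than about parametrizations of each circle, which is what you actually need. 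With that clarification, the argument is complete and agrees with the argument the cited reference has in mind.
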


We note that whenever $\{\alpha_1, \alpha_2\}$ and $\{\beta_1, \beta_2\}$ are not bounding pairs, the cut surfaces $S \setminus \{\alpha_1, \alpha_2\}$ and $S\setminus \{\beta_1, \beta_2\}$ are necessarily homeomorphic. 
We now use \Cref{Lem:LanternToTwist} to prove that $6$ conjugates of $\phi$ suffice to build a Dehn twist about any nonseparating curve.

\medskip

\smallskip

\noindent{\bf \Cref{T:SixConjugates}}
    {\em \tSixConjugates }

\begin{proof}
Since $g \geq 3$, we can fix an embedding of seven nonseparating simple closed curves in  the surface $S_g$ such that they satisfy the lantern relation 
\[
T_{\alpha_1} T_{\alpha_2} T_{x_1} T_{\gamma_2} = T_{\gamma_1} T_{x_3} T_{x_2}.
\]
If we can show that there exist elements which are each  conjugate to some power of $\phi$ such that the conditions in \Cref{Lem:LanternToTwist} hold, then we have proven our claim.

We first show there exists an element $\psi_{f}\in \Map(S_g)$ and a power $i \in \mathbb{Z}$ such that $\psi_{f}^{-1} \phi^{i} \psi_{f}(\gamma_1) = \gamma_2$, or equivalently, that $\phi^{i} \psi_{f}(\gamma_1) = \psi_{f}(\gamma_2)$.
We note that the curves $\gamma_1$ and $\gamma_2$ are disjoint and do not form a bounding pair.
\Cref{T:DisjointCurves} together with \Cref{Cor:NoBoundingPair} show that we can find disjoint curves and $a$ and $b = \phi^{i}(a)$ which also do not form a bounding pair. 
Note that since $a$ and $b$ are nonseparating and do not form a bounding pair, the cut surfaces $S \setminus \{\gamma_1, \gamma_2\}$ and $S\setminus \{a, b\}$ are necessarily homeomorphic. 
So, it follows from \Cref{Lem:ChangeOfCoordiantes} that there exists an element $\psi_{f} \in \Map(S_g)$ such that $\psi_{f}(\gamma_1) = a$ and $\psi_{f}(\gamma_2) = b$.
Hence, it follows that $\psi_{f}^{-1} \phi^{i} \psi_{f}(\gamma_1) = \gamma_2$. 
As the pairs $\{x_1, x_3\}$ and $\{x_2, \alpha_2\}$ consist of disjoint curves which do not form a bounding pair, a similar argument shows that there exist elements $\psi_g$ and  $\psi_h$ in  $\Map(S_g)$ and $j,k \in \mathbb{Z}$ such that $\psi_{g}^{-1} \phi^{j} \psi_{g}(x_3) = x_1$ and $\psi_{h}^{-1} \phi^{k} \psi_{h}(x_2) = \alpha_2$. 

Therefore, the elements $f = \psi_f^{-1} \phi^i \phi_f$, $g = \psi_g^{-1} \phi^j \psi_g$, and $h = \psi_h^{-1} \phi^k \psi_h$ satisfy the hypothesis of \Cref{Lem:LanternToTwist}, and so $T_{\alpha_1}$ can be written in a product of 
six elements which are conjugate to $\phi$. Suppose now that $c$ is any other nonseparating simple closed curve in $S_g$. By the change of coordinates principle, there is a homeomorphism $\psi_c$ which maps $c$ to $\alpha_1$. Therefore, $T_c = T_{\phi_c^{-1}(\alpha_1)} = \psi_c^{-1} T_{\alpha_1} \psi_c$. Thus, the Dehn twist about any fixed nonseparating, simple closed curve in $S_g$ can be written as a product of 6 conjugates of $\phi$.

\end{proof}

As an immediate corollary, we get the following result about the stable specific torsion length of the Dehn twist about any nonseparating curve $c$. \\

\noindent \textbf{Corollary 1.5.}
\textit{Let $g \geq 3$, let $T_c$ be the Dehn twist about any fixed nonseparating curve $c$, and let $\phi \in \Map(S_g)$ be any torsion element which is not the hyperelliptic involution.
Then $\stl_\phi(T_c) \leq 6$.}\\

\bibliographystyle{plain}
\bibliography{bib}

\end{document}